\newtheorem{theorem}{Theorem}[section]
\newtheorem{lemma}[theorem]{Lemma}
\theoremstyle{definition}
\newtheorem{definition}[theorem]{Definition}
\theoremstyle{remark}
\newtheorem{remark}[theorem]{Remark}
\DeclareMathOperator{\len}{lh}
\DeclareMathOperator{\dom}{dom}
\DeclareMathOperator{\range}{range}
\DeclareMathOperator{\Suc}{Suc}
\def\MQB{{\mathbb{Q}}}
\def\MRB{{\mathbb{R}}}
\newtheorem{claim}[theorem]{Claim}
\def\MQB{{\mathbb{Q}}}
\def\MRB{{\mathbb{R}}}
\def\rmark{\mbox{$\rm\bf\rule{0.06em}{1.45ex}\kern-0.05em R$}}
\def\pmark{\mbox{$\rm\bf\rule{0.06em}{1.45ex}\kern-0.05em P$}}
\def\nmark{\mbox{$\rm\bf\rule{0.06em}{1.45ex}\kern-0.05em N$}}
\def\vdash{\mbox{$\rm\| \kern-0.13em -$}}
\newcommand{\lusim}[1]{\smash{\underset{\raisebox{1.2pt}[0cm][0cm]{$\sim$}}
{{#1}}}}
\newcommand{\Lim}{\hbox{Lim }}
\newcommand{\rt}{\hbox{rt}}
\def\rmark{\mbox{$\rm\bf\rule{0.06em}{1.45ex}\kern-0.05em R$}}
\def\pmark{\mbox{$\rm\bf\rule{0.06em}{1.45ex}\kern-0.05em P$}}
\def\nmark{\mbox{$\rm\bf\rule{0.06em}{1.45ex}\kern-0.05em N$}}
\def\vdash{\mbox{$\rm\| \kern-0.13em -$}}
\begin{document}

\title{On slow minimal reals I}

\author{Mohammad Golshani}
\address{Mohammad Golshani, School of Mathematics, Institute for Research in Fundamental Sciences (IPM), P.O.\ Box:
19395--5746, Tehran, Iran.}
\email{golshani.m@gmail.com}
\thanks{The first author's research has been supported by a grant from IPM (No. 99030417). He also thanks Heike Mildenberger
for her discussions on Shelah's creature forcing and the results of this paper.}

\author{Saharon Shelah}
\address{Einstein Institute of Mathematics, The Hebrew University of Jerusalem, Jerusalem,
91904, Israel, and Department of Mathematics, Rutgers University, New Brunswick, NJ
08854, USA.}
\email{shelah@math.huji.ac.il}
\thanks{ The second
author's research has been partially supported by the European Research Council grant 338821. This is
publication 1198 of second author.}

\thanks{The authors thank the referee of the paper for his comments on early versions of the paper.}
\subjclass[2020]{Primary 03E35}

\date{}


\keywords{Minimal real, creature forcing}

\begin{abstract}
Answering a question of Harrington, we show that there exists a proper forcing notion, which adds a minimal real $\eta \in \prod_{i<\omega} n^*_i$, which is eventually different from any old real
in $\prod_{i<\omega} n^*_i$,
where the sequence $\langle n^*_i \mid i<\omega     \rangle$
grows slowly.
\end{abstract}

\maketitle

\section{Introduction}

The method of creature forcing was introduced by Shelah for solving problems related to cardinal invariants like the unbounded
number or the splitting number, as well as questions of the existence of special
kinds of $P$-points. We refer to  \cite{r-shelah} for a more complete history about the development of the subject and its wide applications.

Let us call a real $\bold{r}: \omega \rightarrow \omega$ is slow, if for each $n > 0, \bold{r}(n) \leq n^{g(n)}$ where $g: \omega \rightarrow \omega$ is
non-decreasing with $\lim_{n \rightarrow \infty}g(n)=\infty$
and $\lim_{n \rightarrow \infty} \frac{g(n)}{\log_2(n)}=0$

In this paper, we use the method of tree creature forcing to  answer a question of Leo Harrington\footnote{Harrington asked the question from the second author in personal communication.}. There are several examples of forcing notions like Silver-Prikry forcing \cite{Grigorieff}, Laver forcing \cite{groszek}, Jensen's minimal forcing \cite{jensen}, Rational perfect set forcing \cite{miller}, Sacks forcing \cite{sacks}, splitting forcing \cite{schilhan}    and so on, which add a minimal real into the ground model. In these examples, the real is fast growing and can not be dominated by the ground model reals. On the other hand, given a sequence  $\langle n^*_i \mid i<\omega     \rangle$
 of natural numbers which grows very fast, one can define a forcing notion which adds a real $\eta \in \prod_{i<\omega} n^*_i$, which is minimal and is eventually different from any old real in $\prod_{i<\omega} n^*_i$; see for example  \cite{carl} and \cite{judah-shelah}. Motivated by these results, Harrington asked the second author if there is a proper forcing notion which adds a minimal real as above into a sequence
$\langle n^*_i \mid i<\omega     \rangle$ which grows slowly.
In this paper we   give abstract conditions, called local minimality condition and global minimality condition, such that if a tree creature  forcing notion satisfies them, then it adds a slow minimal real.
In a further paper we prove similar results for the measured creature forcing.

The paper is organized as follows. In Section \ref{Creatures and tree creatures}, we introduce some preliminary results on creature forcing.
Given a sequence $\langle n^*_i \mid i<\omega     \rangle$
 of natural numbers which grows slowly\footnote{See Section \ref{example for minimality} for some examples of such sequences.}, we define, in Section \ref{Minimality with creatures}, a class of creature forcing notions, where each of them adds a real $\eta \in \prod_{i<\omega} n^*_i$, which is minimal and is eventually different from any old real in $\prod_{i<\omega} n^*_i$. A very special case of the results of this section is proved in \cite{ci-shelah}.
In Sections \ref{Sufficient conditions for global minimality condition} and \ref{example for minimality}, we use  probabilistic arguments, to show that the class of such forcing notions is non-empty.

We assume no familiarity with creature forcing, and present all required preliminaries, to make the paper as self contained as possible.

\section{Creatures and tree creatures}
\label{Creatures and tree creatures}
In this section, we briefly review some concepts from Shelah's creature forcing,   that will be used in the rest of the paper. Our presentation follows  \cite{mildenberger} and \cite{r-shelah}.

\begin{definition}
\begin{enumerate}
\item [$(a)$] A quasi-tree $(T, \lhd)$ over $X$ is a set $T \subseteq X^{<\omega}$
with the initial segment relation, such that $T$ has a $\lhd$-minimal element, called the root of $T$, $\rt(T).$
It is called a tree, if it is closed under initial segments of length $\geq \len(\rt(T))$.

\item [$(b)$] For $\eta \in T, \Suc_T(\eta)$ is the set of all
immediate successors of $\eta$ in $T$:
\[
\Suc_T(\eta)=\{ \nu \in T: \eta \lhd \nu \text{~and~} \neg \exists \rho \in T (\eta \lhd \rho \lhd \nu)          \}.
\]

\item [$(c)$] $\max(T)$ is the set of all maximal nodes (if there are any) in $T$:
\[
\max(T)=\{\eta \in T: \neg \exists \rho \in T, \eta \lhd \rho       \}.
\]

\item [$(d)$] $\hat{T}=T \setminus \max(T)$.

\item [$(e)$] For $\eta \in T, T^{[\eta]}=\{\nu \in T: \nu$ and $\eta$ are comparable  $ \}$.

\item [$(f)$] $\Lim(T)$ is the set of all cofinal branches through $T$:
\[
\Lim(T)=\{\eta \in X^\omega: \exists^{\infty}n,~ \eta \restriction n \in T  \}.
\]
\item [$(g)$] $T$ is well-founded if it has no cofinal branches.

\item [$(h)$] a subset $J \subseteq T$ is a front of $T$, if $J$ consists of $\lhd$-incomparable elements and for any branch $\eta$
of $T$, $\eta \restriction n \in J,$ for some $n.$
\end{enumerate}
\end{definition}

Let $\chi$ be a sufficiently large enough regular cardinal.
Let also $\mathbf{H}: \omega \to V \setminus \{\emptyset\}$ be a function
such that
for each $i<\omega,$ $|\mathbf{H}(i)| \geq 2$.
We call $\mathbf{H}(i)$ the reservoir at $i$.
The forcing notions we define aim to add a function $g \in \prod_{i<\omega} \mathbf{H}(i).$
\begin{definition}
\begin{enumerate}
\item [$(a)$] A (weak) creature for $\mathbf{H}$ is a tuple $t=(\mathbf{nor}[t], \mathbf{val}[t], \mathbf{dis}[t])$, where
\begin{itemize}
\item [(1)] $\mathbf{nor}[t] \in \mathbb{R}^{\geq 0}$.

\item [(2)] $\mathbf{val}[t] \subseteq \bigcup_{m_0 < m_1 < \omega} \{ (w, u) \in  \prod_{i< m_0}\mathbf{H}(i) \times \prod_{i<m_1}\mathbf{H}(i):   w \lhd u          \}$.

\item [(3)] $\mathbf{dis}[t] \in H(\chi).$
\end{itemize}
The family of all  creatures for $\mathbf{H}$ is denoted by $\text{CR}[\mathbf{H}]$.
\item [$(b)$] If $t \in \text{CR}[\mathbf{H}],$ then $\text{basis}(t)= \dom(\mathbf{val}[t])$
and $\text{pos}(t)=\range(\mathbf{val}[t])$.

\item [$(c)$] A creature $t$ is a tree-creature, if $\text{basis}(t)=\{\eta\}$ is a singleton and no distinct elements of $\text{pos}(t)$ are $\lhd$-comparable.
The family of all  tree-creatures for $\mathbf{H}$ is denoted by $\text{TCR}[\mathbf{H}]$. We also set
\[
\text{TCR}_{\eta}[\mathbf{H}] =\{ t \in \text{TCR}[\mathbf{H}]:      \text{basis}(t)=\{\eta\}        \}.
\]
\end{enumerate}
\end{definition}

\begin{definition}
\begin{enumerate}
\item [$(a)$] Let $K \subseteq \text{CR}[\mathbf{H}]$.
A function $\Sigma: [K]^{\leq \omega} \to \mathcal{P}(K)$ is called a sub-composition operation on $K$, if
\begin{itemize}

\item [(1)] $\Sigma(\emptyset)=\emptyset$, and for each $s \in K, s \in \Sigma(s).$

\item [(2)] (transitivity) If $\mathcal{S} \in [K]^{\leq \omega}$ and for each $s \in \mathcal{S}, \mathcal{S}_s \in [K]^{\leq \omega}$
is such such that $s \in \Sigma(\mathcal{S}_s)$, then $\Sigma(\mathcal{S}) \subseteq \Sigma(\bigcup_{s \in \mathcal{S}}\mathcal{S}_s)$.

\end{itemize}
\item [$(b)$] A pair $(K, \Sigma)$ is called a creating pair for $\mathbf{H}$ if $K \subseteq \text{CR}[\mathbf{H}]$
and $\Sigma$ is a sub-composition operation on $K$

\item [$(c)$] A sub-composition operation $\Sigma$ on $K \subseteq \text{TCR}[\mathbf{H}]$ is called a tree-composition operation on $K$. Such a pair
$(K, \Sigma)$ is called a tree-creating pair for $\mathbf{H},$ if in addition:
\begin{itemize}
\item [(1)] If $\mathcal{S} \in [K]^{\leq \omega}$ and $\Sigma(\mathcal{S}) \neq \emptyset,$ then there exists a well-founded quasi tree $T \subseteq \bigcup_{n<\omega} \prod_{i<n}\mathbf{H}(i)$ and a sequence $\langle s_\nu: \nu \in \hat{T}   \rangle \subseteq K$
    such that $\mathcal{S}=\{    s_\nu: \nu \in \hat{T}    \}$, and for each $\nu \in \hat{T}, s_\nu \in \text{TCR}_{\nu}[\mathbf{H}]$
    and $\text{pos}(s_\nu)=\Suc_T(\nu)$.

\item [(2)] If $t \in \Sigma(s_\nu: \nu \in \hat{T}),$ then $t \in \text{TCR}_{\rt(T)}[\mathbf{H}]$
and $\text{pos}(t) \subseteq \max(T)$ (where $T$ is as in (1)).
\end{itemize}
\end{enumerate}
\end{definition}
To each tree-creating pair $(K, \Sigma)$, we assign the forcing notion $\MQB^{\text{tree}}(K, \Sigma)$ as follows:
\begin{definition}
Assume $(K, \Sigma)$ is  a tree-creating pair for $\mathbf{H}$. Then $\MQB^{\text{tree}}(K, \Sigma)$
consists of all sequences $p= \langle \mathfrak{c}^p_\eta: \eta \in T^p     \rangle$, where
\begin{enumerate}
\item $T^p \subseteq \bigcup_{n<\omega} \prod_{i<n}\mathbf{H}(i)$ is a tree with no maximal nodes, i.e., $\max(T^p)=\emptyset.$

\item $\mathfrak{c}^p_\eta \in \text{TCR}_{\eta}[\mathbf{H}] \cap K$ and $\text{pos}(\mathfrak{c}^p_\eta)=\Suc_{T^p}(\eta)$.
\end{enumerate}
Given $p, q \in \MQB^{\text{tree}}(K, \Sigma)$, we set $p \leq q$ ($p$ is an extension of $q$), if $T^p \subseteq T^q$
and for each $\eta \in T^p,$ there exists a well-founded quasi-tree $T \subseteq (T^p)^{[\eta]}$ such that $\mathfrak{c}^p_\eta \in \Sigma(\mathfrak{c}^q_\nu: \nu \in \hat{T}).$
\end{definition}
The forcing notion $\MQB^{\text{tree}}(K, \Sigma)$
 is maximal among all  forcing notions that we assign to  $(K, \Sigma)$, in the sense that all other forcing notions that we define, are subsets of
$\MQB^{\text{tree}}(K, \Sigma)$.

\section{Minimality with creatures}
\label{Minimality with creatures}
In this section, we define a class of creature forcing notions which add a minimal real with slow splitting.
\begin{definition}
Let $(K, \Sigma)$ be  a tree-creating pair for $\mathbf{H}$.  The forcing notion $\MQB^*(K, \Sigma)$ consists of all conditions
$p= \langle \mathfrak{c}^p_\eta: \eta \in T^p     \rangle \in \MQB^{\text{tree}}(K, \Sigma)$ such that
\begin{enumerate}
\item  $T^p \subseteq \omega^{<\omega}$ is a finite branching tree with  $\max(T^p)=\emptyset.$

\item  Each $\mathfrak{c}^p_\eta$ is finitary, i.e., $|\mathbf{val}[\mathfrak{c}^p_\eta]|$ is finite.

\item  $\eta \in \Lim(T^p) \implies \lim\inf (\mathbf{nor}[\mathfrak{c}^p_{\eta \restriction n}]: n \geq \len(\rt(T^p)))=\infty.$

\item  Each $\mathfrak{c}^p_\eta$ is $2$-big, i.e., if $\mathbf{nor}[\mathfrak{c}^p_\eta] \geq 1$ and $\text{pos}(\mathfrak{c}^p_\eta)= u_0 \cup u_1$ is a partition
of $\text{pos}(\mathfrak{c}^p_\eta)$, then for some $\mathfrak{d} \in \Sigma(\mathfrak{c}^p_\eta), \mathbf{nor}[\mathfrak{d}] \geq \mathbf{nor}[\mathfrak{c}^p_\eta] - 1$
and for some $l<2, \text{pos}(\mathfrak{d}) \subseteq u_l.$
\end{enumerate}
Given $p, q \in \MQB^*(K, \Sigma),$ $p \leq q$ iff
\begin{enumerate}
\item  $T^p \subseteq T^q.$
\item  For each $\eta \in T^p, \mathfrak{c}^p_\eta \in \Sigma(\mathfrak{c}^q_\eta)$.
\end{enumerate}
\end{definition}
A proof of the next lemma can be found in  \cite{mildenberger} and \cite{r-shelah}, and is essentially based on the fact that all creatures involved in the forcing are $2$-big.
\begin{lemma} \label{lem:basic properties}
\begin{enumerate}

 \item [(a)] $(\MQB^*(K, \Sigma), \leq)$ is proper.

\item [(b)] (continuous reading of names) For any condition $p \in \MQB^*(K, \Sigma),$ any $k<\omega$ and any $\MQB^*(K, \Sigma)$-name $\lusim{f}: \omega \to \check{V}$,
there exists $q$ such that
\begin{itemize}
\item [$(\alpha)$] $q \leq_k p,$ i.e., $q \leq p$
and if $\eta \in T^q$ and $\len(\eta) \leq k \vee \mathbf{nor}[\mathfrak{c}^q_\eta] \leq k$ then $\mathfrak{c}^q_\eta=\mathfrak{c}^p_\eta$,
so $\Suc_{T^q}(\eta)=\Suc_{T^p}(\eta)$.

\item [$(\beta)$] For every $n$, there exists a front $J_n$ of $T^q$ such that
\[
\eta \in J_n \implies \exists a \in V, q^{[\eta]} \Vdash \text{``}\lusim{f}(n)= a\text{''}.
\]
\end{itemize}
\end{enumerate}
\end{lemma}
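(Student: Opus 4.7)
\emph{Overall strategy.} I treat (b) as the main statement; once (b) holds, (a) follows from it by a standard fusion argument with an enumeration of dense sets. The engine throughout is the $2$-bigness assumption: every time one wishes to prune a condition so as to decide something, one picks a partition of $\text{pos}(\mathfrak{c}^p_\eta)$ and applies $2$-bigness to replace $\mathfrak{c}^p_\eta$ with a sub-creature in $\Sigma(\mathfrak{c}^p_\eta)$ whose positions lie on one side, paying only a unit drop in the norm. The definition of $\leq_l$ guarantees that such unit drops are harmless so long as the modified creature has norm strictly greater than $l$.

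\emph{Single-step lemma.} The main technical step is: given $r \in \MQB^*(K,\Sigma)$, an integer $l<\omega$, and a $\MQB^*(K,\Sigma)$-name $\lusim{a}$ for an element of $\check V$, there exist $r' \leq_l r$ and a front $J$ of $T^{r'}$ such that $r'^{[\eta]}$ decides $\lusim{a}$ for every $\eta \in J$. I prove this by assigning to each $\eta \in T^r$ with $\mathbf{nor}[\mathfrak{c}^r_\eta]$ sufficiently large a rank $\mathrm{rk}_l(\eta)$: set $\mathrm{rk}_l(\eta)=0$ if some $\bar r \leq_l r^{[\eta]}$ already decides $\lusim{a}$, and otherwise define $\mathrm{rk}_l(\eta)$ recursively in terms of the ranks of the immediate successors in $T^r$. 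The first claim is that $\mathrm{rk}_l(\rt(T^r))$ is always defined; I prove it by contradiction. If the set of $\eta \in T^r$ of undefined rank is cofinal in $T^r$, a diagonal fusion along this set, using $2$-bigness to discard successors of defined rank at each node, produces a condition $r^* \leq r$ no extension of which decides $\lusim{a}$, contradicting the density of names deciding an element of $V$. Once ranks are defined everywhere, I prune $T^r$ top-down: at each $\eta$ with $\mathrm{rk}_l(\eta)>0$, partition $\text{pos}(\mathfrak{c}^r_\eta)$ into successors $\nu$ with $\mathrm{rk}_l(\nu)<\mathrm{rk}_l(\eta)$ versus the rest, apply $2$-bigness to choose $\mathfrak{c}^{r'}_\eta \in \Sigma(\mathfrak{c}^r_\eta)$ whose positions lie in the first set, and iterate. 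The strict decrease of ranks forces termination on a front $J$ of $\mathrm{rk}_l=0$ nodes, each of which is further extended via the base case.

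\emph{Deducing (b) and (a).} For (b), iterate the single-step lemma: set $p_0 = p$, and given $p_n$ apply the lemma to $(p_n, k+n, \lusim{f}(n))$ to get $p_{n+1} \leq_{k+n} p_n$ and a front $J_n$ of $T^{p_{n+1}}$ deciding $\lusim{f}(n)$. The fusion $q$ has $\mathfrak{c}^q_\eta$ equal to the eventually stable value of $\mathfrak{c}^{p_n}_\eta$; the orderings $\leq_{k+n}$ ensure that each creature is modified in only finitely many steps, namely those during which it has both high level and high norm, so stabilization holds. The condition $\liminf_m \mathbf{nor}[\mathfrak{c}^q_{x \restriction m}] = \infty$ for $x \in \Lim(T^q)$ is inherited from $p$ by the same bookkeeping. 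For (a), proceed analogously: given a countable $N \prec H(\chi)$ containing $(K,\Sigma)$ and $p \in \MQB^*(K,\Sigma) \cap N$, enumerate $\langle D_n : n<\omega \rangle$ the open dense subsets of $\MQB^*(K,\Sigma)$ in $N$, and repeat the fusion, replacing the single-step decision of $\lusim{f}(n)$ by the requirement that every element of the front $J_n$ extend into $D_n \cap N$; the limit $q$ is $(N, \MQB^*(K,\Sigma))$-generic.

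\emph{Main obstacle.} The delicate point is verifying that $\mathrm{rk}_l(\rt(T^r))$ is always defined, equivalently ruling out the "diagonally bad" condition $r^*$ in the contradiction argument. The construction of $r^*$ must yield a bona fide element of $\MQB^*(K,\Sigma)$: norms along every branch must go to infinity despite repeated $2$-bigness-induced unit drops, and the $\leq_l$ restrictions must be respected so that low-norm creatures are not inadvertently altered. It is precisely here that the $2$-bigness property---with its unit-budget loss per partition---is indispensable; any weaker partition property would not allow the diagonal fusion to survive the $\liminf$-norm condition on conditions in $\MQB^*(K,\Sigma)$.
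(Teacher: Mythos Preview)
The paper does not actually prove this lemma: it cites \cite{mildenberger} and \cite{r-shelah} and remarks only that the argument rests on $2$-bigness. Your proposal is precisely the standard argument from those references---a rank function on $T^r$, $2$-bigness to propagate ``undefined rank'' downward, a contradiction with density, and then fusion to upgrade the single-step lemma to continuous reading of names and to properness---so in substance your sketch and the paper's (outsourced) proof coincide.

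One imprecision is worth naming, since you yourself flag the step as the main obstacle. In the contradiction you say the diagonal $r^*$ ``has no extension which decides $\lusim a$''. That is false on its face: deciding conditions are dense, so some $q\leq r^*$ decides. The actual contradiction must be that such a $q$ (or a node on the way to it) witnesses $\mathrm{rk}_l=0$ at some $\eta\in T^{r^*}$. With your rank-$0$ clause ``some $\bar r\leq_l r^{[\eta]}$ decides'', this does not follow, because an arbitrary deciding extension below $r^*$ need not be a $\leq_l$-extension of $r^{[\eta]}$. In the Ros\l anowski--Shelah treatment this is handled by phrasing rank~$0$ in terms of a fixed per-node norm drop (e.g.\ there is $\mathfrak d\in\Sigma(\mathfrak c^r_\eta)$ with $\mathbf{nor}[\mathfrak d]\geq\mathbf{nor}[\mathfrak c^r_\eta]-1$ and \ldots), so that $2$-bigness interacts directly with the rank recursion and the contradiction really is with density. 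With that adjustment your outline is correct.
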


We now state and prove the main result of this section, which gives  sufficient conditions for the forcing
notion $\MQB^*(K, \Sigma)$  to add a minimal real.
\begin{theorem} \label{minimal real}
Assume  $\MQB^*(K, \Sigma)$  satisfies the following conditions:
\begin{enumerate}

 \item [(a)] (the local minimality condition) If $\mathbf{nor}[\mathfrak{c}^p_\eta] \geq 1$
and $E$ is an equivalence relation on $\text{pos}(\mathfrak{c}^p_\eta)$, then for some $\mathfrak{d} \in \Sigma(\mathfrak{c}^p_\eta)$ we have
\begin{itemize}
\item [$(\alpha)$] $\mathbf{nor}[\mathfrak{d}] \geq \mathbf{nor}[\mathfrak{c}^p_\eta]-1.$
\item [$(\beta)$] $E \restriction \text{pos}(\mathfrak{d})$ is trivial: either every equivalence class is a singleton or it has one equivalence class.
\end{itemize}

\item [(b)] (the global minimality condition) For every $p \in \MQB^*(K, \Sigma)$ and $k<\omega$ there are $q \in \MQB^*(K, \Sigma)$ and $m > k, \len(\rt(T^p))$
such that
\begin{itemize}
\item [$(\alpha)$] $q \leq_k p.$

\item [$(\beta)$] If $J$ is a front of $T^q$ consisting of sequences of length $> m,$ $S$ is a finite set and $\langle f_\rho: \rho\in J  \rangle$
is a sequence of one-to-one functions from $\Suc_{T^q}(\rho)$ into $S$, then there exists a partition $S=S_0 \cup S_1$
of $S$ such that for every $\eta \in T^q \cap \omega^m,$ there are $r_0, r_1$
such that each $r_l \leq_{k}q^{[\eta]}$
and
\[
\eta \lhd \rho \in J \cap T^{r_l} \implies f_\rho``[\Suc_{T^{r_l}}(\rho)] \subseteq S_l.
\]
\end{itemize}
\end{enumerate}
Then $\Vdash_{\MQB^*(K, \Sigma)}$``$\lusim{\eta}=\bigcup \{\rt(T^p): p \in \dot{G}_{\MQB^*(K, \Sigma)}    \}$ is a minimal real''.
\end{theorem}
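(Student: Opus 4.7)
My plan is to prove minimality by showing that, for any $\MQB^*(K,\Sigma)$-name $\lusim{f}$ for a real and any condition $p$, there is $q \leq p$ which either forces $\lusim{f} \in V$ or forces $\lusim{\eta} \in V[\lusim{f}]$. The strategy combines continuous reading, a fusion using local minimality that installs a uniform dichotomy at each creature, a case split on whether the dichotomy resolves as ``splitting'' cofinally along branches, and an application of global minimality in the non-splitting case.

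First, by Lemma \ref{lem:basic properties}(b), replace $p$ by $p' \leq_0 p$ continuously reading $\lusim{f}$: obtain fronts $J_n \subseteq T^{p'}$ and values $v^n_\rho \in V$ with $(p')^{[\rho]} \Vdash \lusim{f}(n) = v^n_\rho$ for $\rho \in J_n$. Then build a fusion $p' = q_0 \geq_0 q_1 \geq_1 \cdots$ with limit $q$. At stage $n$, for each $\rho \in T^{q_n}$ with $\len(\rho) \leq n$ and each $m \leq n$, form the equivalence relation $E^m_\rho$ on $\text{pos}(\mathfrak{c}^{q_n}_\rho) = \Suc_{T^{q_n}}(\rho)$ by declaring $\nu \sim \nu'$ iff the ``$\lusim{f}(m)$-trace'' below $\nu$ and below $\nu'$ agree, where the trace is the forced value $v^m_\sigma$ when $\Suc_{T^{q_n}}(\rho) \subseteq J_m$ (pre-front level), and an inductively defined push-up value at higher levels. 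Apply local minimality (condition (a)) to shrink $\mathfrak{c}^{q_n}_\rho$, losing at most one unit of norm, so that $E^m_\rho$ becomes trivial on $\text{pos}(\mathfrak{d})$. Budgeting norms geometrically across stages, the fusion limit $q$ lies in $\MQB^*(K,\Sigma)$ and satisfies: at every $(\rho,m)$, $E^m_\rho$ restricted to $\Suc_{T^q}(\rho)$ is either the identity (splitting) or the full relation (non-splitting).

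Now case-split on $q$. If the splitting pairs $(\rho,m)$ are cofinal in $T^q$ — every branch meets infinitely many — a further extraction produces $q^* \leq q$ along which every branch is splitting-rich; at each splitting ancestor $\rho$ of the generic branch, knowing $\lusim{f}(m)$ pins down, via the one-to-oneness from identity trivialization, the successor of $\rho$ taken by $\lusim{\eta}$, so $q^* \Vdash \lusim{\eta} \in V[\lusim{f}]$. Otherwise, there is $q' \leq q$ below which every $(\rho,m)$ is non-splitting; pushing the decision of $\lusim{f}(m)$ up from $J_m$ through the finitely many non-splitting levels either terminates with $\lusim{f}(m)$ decided outright (giving $q' \Vdash \lusim{f} \in V$) or stalls at a finite residual ambiguity $U^m \subseteq V$ with $|U^m| \geq 2$. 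Here I invoke global minimality (condition (b)): choose the relevant front $J$ where the induced maps $f_\rho \colon \Suc_{T^{q'}}(\rho) \to U^m$ are one-to-one (this one-to-oneness being the residual of the identity trivialization at the final push-up step); global minimality then yields a partition $U^m = S_0 \cup S_1$ and extensions $r_0,r_1 \leq_k q'$ forcing $\lusim{f}(m) \in S_l$. Iterating the partition $\lceil \log_2 |U^m| \rceil$ times and combining across all $m$ in one final fusion, we obtain $q'' \leq q'$ with $q'' \Vdash \lusim{f} \in V$.

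The main obstacle is the coordinated norm bookkeeping across the stacked fusions — the local-minimality fusion of Step 2, the branch extraction in the splitting case, and the global-minimality partition iterations in the non-splitting case — each consuming at most one unit of norm per application. To ensure the final fusion limit still satisfies the $\liminf = \infty$ norm requirement of $\MQB^*(K,\Sigma)$, a Shelah-style geometric allocation of the norm budget across stages is needed. A subsidiary subtlety is the inductive definition of the ``push-up trace'' so that, when global minimality is invoked, the maps $f_\rho$ are genuinely one-to-one into the finite set $U^m$; this relies on the dichotomous structure arranged by the local-minimality applications in Step 2, and makes essential use of the fact that the $f_\rho$ one-to-oneness hypothesis in condition (b) can be met by a natural coarsening of the fronts $J_n$ rather than having to be constructed ex nihilo.
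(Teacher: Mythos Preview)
The proposal misidentifies the role of the global minimality condition (b). In the paper's proof, condition (b) is the tool that \emph{decodes $\lusim{\eta}$ from $\lusim{f}$}, not one that decides $\lusim{f}$. Concretely: local minimality plus a push-up argument (your Step~2; the paper's Claim~3.5A) produces, under the assumption $p \Vdash \lusim{f} \notin V$, a front $J$ of $T^q$ and one-to-one maps $f_\rho \colon \Suc_{T^q}(\rho) \to S = {}^{n}2$ with $q^{[\nu]} \Vdash \lusim{f} \restriction n = f_\rho(\nu)$ for $\nu \in \Suc_{T^q}(\rho)$, $\rho \in J$. Global minimality is then applied to \emph{this} front and \emph{these} $f_\rho$ (Claim~3.5B): the single partition $S = S_0 \cup S_1$ together with the pairs $r_{\eta,0}, r_{\eta,1}$, available for \emph{every} $\eta \in T^q \cap \omega^m$, lets one assemble a single condition $r$ by selecting $r_{\eta,\eta(i)}$ at each such $\eta$. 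Then $r$ forces $\lusim{\eta}(i) = j \iff \lusim{f} \restriction n \in S_j$, i.e.\ $\lusim{\eta}(i) = \Psi(\lusim{f} \restriction n)$ for a ground-model $\Psi$. The whole point of the quantifier ``for every $\eta \in T^q \cap \omega^m$'' in (b)$(\beta)$ is this coordination across all level-$m$ nodes via one partition; that is what makes $\Psi$ well-defined without knowing which $\eta$ the generic passes through.

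Your splitting case attempts to bypass this coordination, decoding the branch node-by-node from the individual one-to-one maps. But in $V[\lusim{f}]$ you do not a priori know which node $\rho$ the generic sits at, so you cannot invert $f_\rho$ without first knowing $\rho$; a level-by-level induction would require \emph{every} node (not merely cofinally many along each branch) to be splitting, and the order on $\MQB^*(K,\Sigma)$ does not let you skip non-splitting nodes, since $p \leq q$ demands $\mathfrak{c}^p_\eta \in \Sigma(\mathfrak{c}^q_\eta)$ at each $\eta \in T^p$ with no vertical composition. Your non-splitting case is likewise off: if below $q'$ every $(\rho,m)$ is non-splitting (constant), then the push-up reaches the root for every $m$ and $q' \Vdash \lusim{f} \in V$ outright --- there is no ``stall at $|U^m| \geq 2$'', and in particular no one-to-one $f_\rho$ to feed into condition (b). In short, you invoke (b) in a case where its hypothesis is unavailable, while the case that genuinely needs (b) --- computing $\lusim{\eta}$ from $\lusim{f}$ --- you try to handle without it.
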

\begin{proof}
 Assume $p \in \MQB^*(K, \Sigma)$ and  $p \Vdash_{\MQB^*(K, \Sigma)}$``$\lusim{f}$ is a new real''. Without loss of henrality, we may assume that
  $p \Vdash_{\MQB^*(K, \Sigma)}$``$\lusim{f} \in$$^{\omega}2$.
We show that there is $q \leq p,$ such that $q \Vdash_{\MQB^*(K, \Sigma)}$``$\lusim{\eta} \in V[\lusim{f}]$''.

By Lemma \ref{lem:basic properties}(a) and \cite{r-shelah} and extending $p$ if necessary, we can assume that there exists a sequence $\langle (J_n, H_n): n < \omega \rangle$
such that:
\begin{enumerate}
\item $J_n$ is a front of $T^p$.
\item $J_{n+1}$ is above $J_n$, i.e., $\forall \nu \in J_{n+1} \exists l<\len(\nu),~ \nu \restriction l \in J_n.$
\item $H_n: J_n \to \mathbf{H}(n)$ is such that for each $\nu \in J_n, p^{[\nu]} \Vdash_{\MQB^*(K, \Sigma)}$``$\lusim{f}(n)=H_n(\nu)$''.
\end{enumerate}
\begin{claim}
\label{3.5A}
For every $k<\omega$ and $p' \leq p$ there are $q, \bar{J}, \bar{f}$ and $\bar{n}$
such that:
\begin{enumerate}
\item[(a)] $q \leq_k p'$,

\item[(b)] $\bar{J}=\langle J_l: l<\omega \rangle$, where each $J_l$ is a front of $T^q$,

\item[(c)] $J_{l+1}$ is above $J_l$,

\item[(d)] $\bar{n}=\langle n(l): l<\omega \rangle$,

\item[(e)] $\bar{f}=\langle f_\nu: \nu \in \bigcup_{l<\omega} J_l     \rangle$,

\item[(f)] If $\nu \in J_l,$ then $f_\nu: \Suc_{T^q}(\nu) \to$$^{n(l)}2$ is such that:
\begin{itemize}
\item $f_\nu$ is one-to-one,

\item if $\rho \in \Suc_{T^q}(\nu), \nu \in J_l$, then
\[
q^{[\rho]} \Vdash_{\MQB^*(K, \Sigma)}\text{``}\lusim{f}\restriction n(l) = f_\nu(\rho)\text{''}.
\]
\end{itemize}
\end{enumerate}
\end{claim}
\begin{proof}
To start, let $q' \leq_k p'$ and $m > k,  \len(\rt(T^{p'}))$ witness (b)$(\beta)$.
Let $n_0$ be such that
\begin{itemize}
\item $\nu \in T^{q'} \wedge \len(\nu) \geq n_0 \implies \mathbf{nor}[\mathfrak{c}^{q'}_\nu] > k +|T^{q'} \cap \omega^{m}|.$

\item $n_0 > m.$

\item $J_{n_0}$ is above $T^{q'} \cap \omega^{m}$.
\end{itemize}
For every $n> n_0,$ we define a function $H^+_n$ with domain a subset of
$\Lambda_n$
and values in $^{n}2$,
a sequence $  \langle \mathfrak{d}^n_\nu: \nu \in \Lambda_n     \rangle$,   by downward induction on $\len(\nu)$
as follows:

\underline{Case 1. $\nu \in J_n:$}
Let $\rho=H^+_n(\nu) \in$$^{n}2$ be such that
$$l<n \wedge \eta \in J_l \wedge \eta \unlhd \nu \implies \rho(l)=H_l(\eta)$$
and set $\mathfrak{d}^n_\nu=\mathfrak{c}^{q'}_\nu$.

\underline{Case 2. $\Suc_{T^{q'}}(\nu) \subseteq \Lambda_n$ and $\langle H^+_n(\rho), \mathfrak{d}^n_\rho: \rho \in \Suc_{T^{q'}}(\nu)  \rangle$
is defined:}
Define an equivalence relation $E_\nu$ on $\Suc_{T^{q'}}(\nu)$ by
\[
(\rho_1, \rho_2) \in E_\rho \iff H^+_n(\rho_1) = H^+_n(\rho_2).
\]
By the local minimality condition (a),
 there exists $\mathfrak{d}^n_\nu \in$
$\Sigma(c^{q'}_\nu)$ such that $\mathbf{nor}[\mathfrak{d}_\nu] \geq \mathbf{nor}[c^{q'}_\nu]-1$
and $H^+_n \restriction \text{pos}(\mathfrak{d}_\nu)$ is constant or one-to-one. If $H^+_n \restriction \text{pos}(\mathfrak{d}_\nu)$ is constant,  then choose such a $\mathfrak{d}^n_\nu$
and let $H^+_n(\nu)$ be that constant value. Otherwise let $H^+_n(\nu)$ be undefined.
So  $\dom(H^+_n)$  is an upward
closed subset of $\Lambda_n$
which includes $J_n$.

Next we show that there exists $n<\omega$ such that there is no $\eta \in T^{q'} \cap \omega^{n_0}$ with $\eta \in \dom(H^+_n).$
Suppose otherwise. Then as $T^{q'} \cap \omega^{n_0}$ is finite, there is  $\eta \in T^{q'} \cap \omega^{n_0}$
such that $\exists^{\infty} n > n_0,~ \eta \in \dom(H^+_n).$
Let $D$ be an ultrafilter on $\omega$
such that $\{n:     \eta \in \dom(H^+_n)   \} \in D.$ Let
\begin{center}
$T=\{\rho \in T^{q'}: \eta \lhd \rho$ and $\forall l (\len(\eta) \leq l < \len(\rho) \implies \{ n: \rho \restriction (l+1) \in \text{pos}(\mathfrak{d}^n_{\rho \restriction l})       \} \in D)               \}.$
\end{center}
Let $q'' \leq q'$ be such that $T^{q''}=T$ and for every $\nu \in T,$ $\mathfrak{c}^{q''}_\nu = \mathfrak{d}^n_\nu$.
Then $q'' \Vdash$``$\lusim{f} \in V$''. To see this, let $\rho \in \Lim(T)$. Then,
\begin{center}
$q'' \Vdash$``$\lusim{f} \restriction (n_0, \omega) = \{(n, H^+_n(\rho \restriction n)): n > n_0         \}$'',
\end{center}
from which the result follows.
We get a contradiction by the fact that $f$ is a new real.

So we can find $n_1> m$ such that $\dom(H^+_{n_1}) \cap (T^{p'} \cap$ $\omega^{n_0})=\emptyset.$ By  extending $q'$, let us assume that
for each $n \geq n_1$ and $\nu \in \dom(H^+_n)\setminus J_n, \Suc_{T^{q'}}(\nu)=\text{pos}(\mathfrak{d}_\nu^n).$ It then follows that, for each $n \geq n_1,$
\[
\nu \in T^{q'} \cap \dom(H^+_n) ~\&~ \nu \lhd \nu' \in T^{q'} \cap \Lambda_n \implies H^+_n(\nu)=H^+_n(\nu').
\]
For $l<\omega$ set $n(l)=n_1+l$ and
\[
J_l=\{\rho \in \Lambda_{n(l)}: \rho \notin \dom(H^+_{n(l)}) \text{~but~} \Suc_{T^{q'}}(\rho) \subseteq   \dom(H^+_{n(l)})           \}.
\]
Then $J_l$ is a front of $T^{q'}$ above $T^{q'} \cap$ $\omega^{n_0}$ and by our construction,
for every $\rho \in J_l, $  $H^+_{n(l)} \restriction \text{pos}(\mathfrak{d}^{n(l)}_\rho)$ is one-to-one.

For $l<\omega$ and $\nu \in J_l$ set $f_\nu=H^+_{n(l)} \restriction \text{pos}(\mathfrak{d}^{n(l)}_\nu)$.
Then $f_\rho: \Suc_{T^{q'}}(\nu) \to$$^{n(l)}2$ is a one-to-one function.

Finally let $q \leq_k p'$ be such that  for each $l<\omega$ and $\rho \in J_{n(l)},$ $\Suc_{T^{q'}}(\rho)=\text{pos}(\mathfrak{d}^{n(l)}_\rho)$.
\end{proof}
\begin{claim}
\label{3.5B}
Suppose $p, \lusim{f}$ are as above, $k, i< \omega$ and $p' \leq p.$ There there are $r,  \Psi, n_\ast$ and $n_\bullet$ such that:
\begin{enumerate}
\item[(a)] $r \leq_k p'$,

\item[(b)] $\Psi: \prod_{l< n_\bullet} \mathbf{H}(l) \cap T^q \to \{0, 1\}$

\item[(c)] If $\nu \in T^r$, $\len(\nu)=n_\ast$, then
\[
r^{[\nu]} \Vdash_{\MQB^*(K, \Sigma)} \text{``} \lusim{\eta}(i)=\Psi(\lusim{f} \restriction n_\bullet)\text{''}.
\]
\end{enumerate}
\end{claim}
\begin{proof}
Let $p', k, i$ be given as above. Without loss of generality we may assume that $k>i$. By the global minimality condition \ref{minimal real}(b), applied
to $p'$ and $k+1$,  we can find $q \leq_{k+1} p'$ and $m> k+1, \len(\rt(T^{p'}))$ satisfying clauses ($\alpha$)
and($\beta$) there. Let $\bar{J}$ and $\bar{f}$ be as in the conclusion of Lemma \ref{3.5A}. So for some $l, J=T^q \cap J_l$
is a front of $T^q$ above level $m$. Let $S=$$^{n(l)}2$. Hence we can apply \ref{minimal real}(b)($\beta$)
and get $(S_0, S_1, \langle r_{\nu, j}: \nu \in T^q$ with $\len(\nu)=m,~ j<2    \rangle)$  as there.
Let $r$ be a $k$-extension of $p'$ such that
\[
T^r=\bigcup\{ T^{r_{\nu, j}}: \nu \in T^q \cap~^{m}\omega \text{~and~} j=\nu(l)                            \}.
\]
Set also $n_\ast=\max\{\len(\nu): \nu \in J   \}$
and $n_\bullet=m.$
Finally define  $\Psi: \prod_{l< n_\bullet} \mathbf{H}(l) \cap T^q \to \{0, 1\}$ by
\[
\Psi(t)=j \iff j=\nu(l) \text{~where~}\nu \text{~is such that~} t \in  T^{r_{\nu, j}}.
\]
It is easily seen that $r, n, \Psi, n_\ast$ and $n_\bullet$ are as required.
\end{proof}
\begin{claim}
\label{3.5C}
For every $k$ and $p' \leq_k p$, there are $q, \bar{n}$ and $\bar{\Psi}$ such that
\begin{enumerate}
\item[(a)] $q \leq_k p,$

\item[(b)] $\bar{n}=\langle n(l): l<\omega   \rangle$,

\item[(c)] $\bar{\Psi}=\langle  \Psi_l: l<\omega    \rangle$,

\item[(d)] $\Psi_l$ is a function from $T^q \cap$$^{n(l)}\omega \to 2,$

\item[(e)] If $\nu \in$$T^q \cap$$^{n(l)}\omega$, then
\[
q^{[\nu]} \Vdash_{\MQB^*(K, \Sigma)}\text{``} \lusim{\eta}(l)=\Psi_l(\lusim{f} \restriction n(l))\text{''}.
\]
\end{enumerate}
\end{claim}
\begin{proof}
By Lemma \ref{3.5B}  we can find  $\bar{r},  \bar{\Psi}, \bar{n_\ast}$ and $\bar{n_\bullet}$ such that:
\begin{itemize}
\item $\bar{r}=\langle r_l: l<\omega  \rangle$
is such that $r_0 \leq_k p'$ and for each $l<\omega,$ $r_{l+1} \leq_{k+l} r_l$,

\item $\bar{n_\ast}= \langle n_{\ast, l}: l<\omega    \rangle$ and $\bar{n_\bullet}= \langle n_{\bullet, l}: l<\omega    \rangle$
are increasing sequences of natural numbers,

\item $\bar{\Psi}= \langle \Psi_l: l<\omega     \rangle$, where $\Psi_l: \prod_{j< n_{\bullet, l}} \mathbf{H}(l) \cap T^q \to \{0, 1\}$,

\item  If $\nu \in T^{r_l}$, $\len(\nu)=n_{\ast, l}$, then
\[
r_l^{[\nu]} \Vdash_{\MQB^*(K, \Sigma)} \text{``} \lusim{\eta}(l)=\Psi(\lusim{f} \restriction n_{\bullet, l})\text{''}.
\]
\end{itemize}
For $l<\omega$ set $n(l)=n_{\ast, l}$ and let $q=\lim_{l \to \infty}r_l$ be the natural condition obtained by the fusion argument applied to the sequence $\bar{r}.$
\end{proof}
Finally we are ready to complete the proof of Theorem \ref{minimal real}. Suppose that $p \Vdash_{\MQB^*(K, \Sigma)}$``$\lusim{f} \in$$^{\omega}2$''.
If $p \nVdash_{\MQB^*(K, \Sigma)}$``$\lusim{f}\notin V$'', then for some $q \leq p, q \Vdash$``$\lusim{f}\in \check{V}$'' and we are done. So suppose otherwise. By Lemma
\ref{3.5C}, there are $q \leq p$, $\bar{n}$ and $\bar{\Psi}$ as there. So for every $l<\omega,$
\[
q \Vdash_{\MQB^*(K, \Sigma)}\text{``}\lusim{\eta}(l)=\Psi_l(\lusim{f}\restriction n(l))\text{''}.
\]
It follows that $q \Vdash_{\MQB^*(K, \Sigma)}$``$\lusim{\eta} \in V[\lusim{f}]$''. We are done.
\end{proof}

\section{Sufficient conditions for global minimality condition}
\label{Sufficient conditions for global minimality condition}
In this section we give some sufficient conditions to guarantee the global minimality condition
of Theorem \ref{minimal real}.
\begin{definition}
Given a finite set $S$, we define a probability space $(\Omega, \mathcal{F}, \text{Prob})$ as follows:
\begin{itemize}
\item $\Omega=\{(S_0, S_1): S_0 \cup S_1$ is a partition of $S      \}$.

\item $\mathcal{F}=\mathcal{P}(\Omega)$.

\item For $A \subseteq \Omega,$ $\text{Prob}(A)= $$\dfrac{|A|}{|\Omega|}$.
\end{itemize}
\end{definition}
\begin{lemma}
\label{conditions fo rglobal minimality}
Assume that for every $p=\langle  \mathfrak{c}^p_\eta: \eta \in T^p \rangle \in \MQB^*(K, \Sigma)$ and every $k<\omega$
there are $m> k, \len(\rt(T^p))$ and $q$ such that:
\begin{itemize}
\item [(a)] $q \leq_k p.$

\item [(b)] If $\eta \in T^p$ and $\len(\eta)\geq m,$ then $\mathbf{nor}[\mathfrak{c}^q_\eta] > k+ |T^q \cap$ $\omega^m|$.

\item [(c)] If $J$ is a front of $T^q$ is such that for $\eta \in J,$   $\len(\eta) \geq m$, $S$ is a finite set and $\langle f_\rho: \rho\in J  \rangle$
is a sequence of one-to-one functions from $\Suc_{T^q}(\rho)$ into $S$, then we can find a sequence
$\langle a_\eta: \eta \in \Lambda \rangle$, where
\[
\Lambda = \{ \eta \in T^q: \len(\eta) \geq m \text{~and~} \eta \text{~is below~}J \text{~or is in ~}J  \}
\]
such that:
\begin{enumerate}
\item [$(\alpha)$] $a_\eta \in (0, 1)_{\mathbb{R}}$.

\item [$(\beta)$] If $\eta \in J,$ then
$a_\eta \geq \text{Prob} ($for some $l \in \{0, 1 \},$ there is no $\mathfrak{d} \in \Sigma(\mathfrak{c}^p_\eta)$
with $\mathbf{nor}[\mathfrak{d}] \geq \mathbf{nor}[\mathfrak{c}^p_\eta]-1$ and $f_\eta``[\text{pos}(\mathfrak{d})] \subseteq S_l).$

\item [$(\gamma)$] If $\eta \in \Lambda \setminus J,$ then
$a_\eta \geq \text{Prob} ($there is no $\mathfrak{d} \in \Sigma(\mathfrak{c}^p_\eta)$
with $\mathbf{nor}[\mathfrak{d}] \geq \mathbf{nor}[\mathfrak{c}^p_\eta]-1$ such that $\nu \in \text{pos}(\mathfrak{d}) \implies E_\nu$ does not occur),
whenever $E_\nu$ are events for $\nu \in \text{pos}(\mathfrak{c}^q_\eta)$ each of probability $\leq a_\nu.$

\item [$(\delta)$] $1 > \Sigma\{a_\eta: \eta \in T^q \cap$ $\omega^m    \}$.
\end{enumerate}
\end{itemize}
Then the global minimality condition
of Theorem \ref{minimal real} holds.
\end{lemma}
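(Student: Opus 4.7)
The plan is to apply the probabilistic method in the sample space $\Omega$ of partitions $S = S_0 \cup S_1$: the aim is to show that a uniformly random partition, with positive probability, permits the required extensions $r_0, r_1$ to be built from $q$ via iterated single-creature modifications. So given $p$ and $k$, I first invoke the hypothesis to fix $m$ and $q \leq_k p$ satisfying (a)--(c), and take this $(q,m)$ as the candidate witness for the global minimality condition. Then I fix arbitrary data $J, S, \langle f_\rho : \rho \in J \rangle$ as in that condition and let $\langle a_\eta : \eta \in \Lambda \rangle$ be the sequence of probability bounds produced by (c).

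For each $\nu \in \Lambda$ and $l \in \{0,1\}$, define an event $E^l_\nu \subseteq \Omega$ by downward induction on $\len(\nu)$. For $\nu \in J$, let $E^l_\nu$ be the event that no $\mathfrak{d} \in \Sigma(\mathfrak{c}^q_\nu)$ with $\mathbf{nor}[\mathfrak{d}] \geq \mathbf{nor}[\mathfrak{c}^q_\nu] - 1$ satisfies $f_\nu``[\text{pos}(\mathfrak{d})] \subseteq S_l$; for $\nu \in \Lambda \setminus J$, let $E^l_\nu$ be the event that no such $\mathfrak{d}$ satisfies $\neg E^l_\mu$ for every $\mu \in \text{pos}(\mathfrak{d})$. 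Write $F_\nu := E^0_\nu \cup E^1_\nu$. The heart of the argument is a downward induction establishing $\text{Prob}(F_\nu) \leq a_\nu$ for every $\nu \in \Lambda$: the base case $\nu \in J$ is exactly ($\beta$), and for the inductive step at $\nu \in \Lambda \setminus J$ the key observation is that $F_\nu$ is contained in the event that no $\mathfrak{d} \in \Sigma(\mathfrak{c}^q_\nu)$ of sufficient norm satisfies $\neg F_\mu$ for every $\mu \in \text{pos}(\mathfrak{d})$. Indeed, any single $\mathfrak{d}$ that avoids $F_\mu = E^0_\mu \cup E^1_\mu$ on all of its positions simultaneously refutes both $E^0_\nu$ and $E^1_\nu$. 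Since $\text{Prob}(F_\mu) \leq a_\mu$ by induction, clause ($\gamma$) applied to the events $F_\mu$ then gives $\text{Prob}(F_\nu) \leq a_\nu$.

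Combining this induction with ($\delta$) and the union bound yields
\[
\text{Prob}\bigl(\bigcup \{F_\eta : \eta \in T^q \cap \omega^m\}\bigr) \leq \sum_{\eta \in T^q \cap \omega^m} a_\eta < 1,
\]
so I may fix a partition $(S_0, S_1)$ at which $\neg E^l_\eta$ holds for every $\eta \in T^q \cap \omega^m$ and $l \in \{0,1\}$. For each such pair $(\eta, l)$, I then build $r_l$ by greedy recursion through the subtree of $T^q$ rooted at $\eta$: at each node $\nu$ where $\neg E^l_\nu$ is known, select a witnessing $\mathfrak{d} \in \Sigma(\mathfrak{c}^q_\nu)$, set $\mathfrak{c}^{r_l}_\nu := \mathfrak{d}$, and iterate on each $\mu \in \text{pos}(\mathfrak{d})$, for which $\neg E^l_\mu$ holds by the very choice of $\mathfrak{d}$. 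The recursion halts as the growing tree meets $J$, where the witnessing creature satisfies $f_\rho``[\text{pos}(\mathfrak{d})] \subseteq S_l$ as required. Hypothesis (b) keeps $\mathbf{nor}[\mathfrak{c}^{r_l}_\nu] > k$ throughout the construction, so $r_l \leq_k q^{[\eta]}$.

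The main obstacle is the inductive step for $F_\nu$. A naive treatment bounds $\text{Prob}(E^0_\nu)$ and $\text{Prob}(E^1_\nu)$ separately, inherits $\text{Prob}(F_\nu) \leq 2 a_\nu$ from the union bound, and then fails because ($\delta$) only provides $\sum a_\eta < 1$ rather than $\sum 2 a_\eta < 1$. The set-theoretic containment above — that a single creature dodging the \emph{combined} events $F_\mu$ defeats both halves at $\nu$ at once — is precisely what permits invoking ($\gamma$) on $F_\mu$ directly and thereby makes the probability budget of ($\delta$) tight enough to close the argument.
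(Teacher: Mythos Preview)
Your proof is correct and follows essentially the same approach as the paper: both define the ``good'' event at each $\eta \in \Lambda$ (your $\neg F_\eta$ is exactly the paper's $\text{Good}_\eta$), prove by downward induction using clauses $(\beta)$ and $(\gamma)$ that its complement has probability at most $a_\eta$, and then apply the union bound via $(\delta)$ to find a partition for which every level-$m$ node is good. Your version is in fact more explicit than the paper's---you spell out which events to feed into $(\gamma)$ at the inductive step and articulate why a single witnessing $\mathfrak{d}$ suffices for both $l=0$ and $l=1$, a point the paper leaves implicit.
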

\begin{proof}
For $\eta \in \Lambda,$ let $\text{Good}_\eta$ be the event: ``For $l<2,$ there is $r_l \in \MQB^*(K, \Sigma)$ such that
$\rt(T^{r_l})=\eta, r_l \leq_k q^{[\eta]}$ and $\forall \nu \in T^{r_l} \cap J,~ \Suc_{T^{r_l}}(\nu) \subseteq S_l$''.
\begin{claim}
For $\eta \in \Lambda, \text{Prob}(\text{Good}_\eta) \geq 1 - a_\eta.$
\end{claim}
\begin{proof}
We prove the claim by downward induction on $\len(\eta)$. Suppose $\eta \in J.$  Let $\Theta_\eta$ denote the statement ``for some $l \in \{0, 1 \},$ there is no $\mathfrak{d} \in \Sigma(\mathfrak{c}^p_\eta)$
with $\mathbf{nor}[\mathfrak{d}] \geq \mathbf{nor}[\mathfrak{c}^p_\eta]-1$ and $f_\eta``[\text{pos}(\mathfrak{d})] \subseteq S_l)$''. So by clause $(c)(\beta),$
$a_\eta \geq \text{Prob} (\Theta_\eta)$.  This implies
\[
1 - a_\eta \leq 1- \text{Prob}(\Theta_\eta) \leq \text{Prob}(\text{Good}_\eta).
\]
Otherwise, $\eta \in \Lambda \setminus J$ and  $\eta$ is not $\lhd$-maximal in $T^q.$
Let $\Phi_\eta$ denote the statement ``there is no $\mathfrak{d} \in \Sigma(\mathfrak{c}^p_\eta)$
with $\mathbf{nor}[\mathfrak{d}] \geq \mathbf{nor}[\mathfrak{c}^p_\eta]-1$ such that $\nu \in \text{pos}(\mathfrak{d}) \implies E_\nu$ does not occur''.
By clause $(c)(\gamma),$
$a_\eta \geq \text{Prob} (\Phi_\eta)$,
so
\[
1 - a_\eta \leq 1- \text{Prob}(\Phi_\eta) \leq \text{Prob}(\text{Good}_\eta).
\]
\end{proof}
Since $1 > \Sigma\{a_\eta: \eta \in T^q \cap$ $\omega^m    \}$, we can find a pair $(S_0, S_1) \in \Omega$
such that for every $\eta \in T^q \cap$ $\omega^m, \text{Good}_\eta$ occurs and hence we can choose $r_0, r_1$ as guaranteed by
$\text{Good}_\eta$ and we are done.
\end{proof}

\begin{lemma} \label{corollary to conditions fo rglobal minimality}
Assume for a dense set of conditions $q \in \MQB^*(K, \Sigma)$ there exists a sequence $\bar a = \langle a_\eta: \eta \in T^q \rangle$
such that:
\begin{itemize}
\item [(a)] Each $a_\eta \in (0, 1]_{\MRB}$.

\item [(b)] If $\eta \in \Lim(T^q),$ then $\lim_n \langle a_{\eta \restriction n}: n \geq \len(\rt(T^q))     \rangle=0.$

\item [(c)] For every large enough $m$, we have
\begin{enumerate}
\item [$(\alpha)$] $1 > \Sigma\{a_\eta: \eta \in T^q \cap$ $\omega^m    \}$.

\item [$(\beta)$] For any  $\eta \in T^q \cap$ $\omega^m$, if $\langle E_\nu: \nu \in \Suc_{T^q}(\eta)     \rangle$
is a sequence of events each of probability $\leq a_\nu,$ then we have $a_\eta \geq \text{Prob} ($there is no $\mathfrak{d} \in \Sigma(\mathfrak{c}^p_\eta)$
with $\mathbf{nor}[\mathfrak{d}] \geq \mathbf{nor}[\mathfrak{c}^p_\eta]-1$ such that $\nu \in \text{pos}(\mathfrak{d}) \implies E_\nu$ does not occur).
\end{enumerate}
\end{itemize}
Then the global minimality condition
of Theorem \ref{minimal real} holds.
\end{lemma}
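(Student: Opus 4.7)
The approach is to verify the hypotheses of Lemma~\ref{conditions fo rglobal minimality} and then invoke that lemma. Fix $p\in\MQB^*(K,\Sigma)$ and $k<\omega$. By the density hypothesis together with a routine $\leq_k$-fusion (built from the continuous reading of names in Lemma~\ref{lem:basic properties}(b)), I would first produce $q\leq_k p$ carrying a witnessing sequence $\bar a=\langle a_\eta:\eta\in T^q\rangle$ satisfying (a)--(c). Using (b) and the divergence of $\mathbf{nor}[\mathfrak{c}^q_{\eta\restriction n}]$ along the branches of $T^q$, I would further thin $q$, still preserving $q\leq_k p$, so that for some $m>k,\len(\rt(T^p))$ one has $\mathbf{nor}[\mathfrak{c}^q_\eta]>k+|T^q\cap\omega^m|$ whenever $\len(\eta)\geq m$, and clauses (c)($\alpha$), (c)($\beta$) of the present lemma hold at every level $\geq m$.

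Now, given any front $J$ of $T^q$ lying above level $m$, a finite set $S$, and one-to-one functions $f_\rho\func\Suc_{T^q}(\rho)\to S$ for $\rho\in J$, put $a^{\mathrm{new}}_\eta:=a_\eta$ for $\eta\in\Lambda$. Clauses ($\alpha$) and ($\delta$) of Lemma~\ref{conditions fo rglobal minimality} are then immediate from (a) and (c)($\alpha$); and clause ($\gamma$), for $\eta\in\Lambda\setminus J$, is a verbatim reading of (c)($\beta$) of the present lemma with the prescribed events $E_\nu$.

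The crux of the argument---and the main obstacle---is clause ($\beta$): for $\eta\in J$, one must show $a_\eta\geq\mathrm{Prob}(\Theta_\eta)$, where $\Theta_\eta$ is the event that for some $l\in\{0,1\}$ no $\mathfrak{d}\in\Sigma(\mathfrak{c}^q_\eta)$ with $\mathbf{nor}[\mathfrak{d}]\geq\mathbf{nor}[\mathfrak{c}^q_\eta]-1$ satisfies $f_\eta``[\mathrm{pos}(\mathfrak{d})]\subseteq S_l$. The plan is to apply (c)($\beta$) of the present lemma at $\eta$ (legitimate since $\len(\eta)\geq m$), once for each colour $l\in\{0,1\}$, with the events $E^l_\nu:=\{(S_0,S_1)\in\Omega: f_\eta(\nu)\in S_{1-l}\}$ at the successors $\nu\in\Suc_{T^q}(\eta)$. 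For each fixed $l$, the event on the right-hand side of (c)($\beta$) unfolds to ``no $\mathfrak{d}$ with $f_\eta``[\mathrm{pos}(\mathfrak{d})]\subseteq S_l$'', and $\Theta_\eta$ is the union of these over $l\in\{0,1\}$. The arithmetic complication is that $\mathrm{Prob}(E^l_\nu)=\tfrac12$, so the hypothesis of (c)($\beta$) forces $a_\nu\geq\tfrac12$ at $\nu\in\Suc_{T^q}(\eta)$: this is to be secured when choosing the witness $\bar a$ from the dense set, exploiting the $(0,1]$-range in (a), the slack in (c)($\alpha$) obtainable by taking $m$ large, and the fact that (b) only constrains asymptotic behaviour along branches. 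Once ($\beta$) is established, Lemma~\ref{conditions fo rglobal minimality} delivers the global minimality condition and the proof is complete.
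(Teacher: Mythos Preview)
The paper's own proof is a single line invoking Lemma~\ref{conditions fo rglobal minimality}, so your elaboration is already more detailed than what the paper supplies. Clauses~($\alpha$), ($\gamma$), ($\delta$) of that lemma do follow from the present hypotheses essentially as you indicate. The genuine gap is in your treatment of clause~($\beta$).

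Your plan for ($\beta$) is to apply hypothesis (c)($\beta$) of the present lemma at $\eta\in J$ with the events $E^l_\nu=\{(S_0,S_1):f_\eta(\nu)\in S_{1-l}\}$, which have probability exactly $\tfrac12$. For (c)($\beta$) to apply you therefore need $a_\nu\geq\tfrac12$ for every $\nu\in\Suc_{T^q}(\eta)$. But $J$ is an \emph{arbitrary} front of $T^q$ above level $m$, so these $\nu$ live at arbitrarily high levels; requiring $a_\nu\geq\tfrac12$ there flatly contradicts hypothesis~(b), which forces $a_{\eta\restriction n}\to 0$ along every branch. Your remark about ``securing'' this by choice of the witness $\bar a$ cannot help: the witness must satisfy~(b), and once $\bar a$ is fixed you must handle all fronts $J$ simultaneously with that same sequence. (There is also a secondary arithmetic problem: even if (c)($\beta$) did apply for each colour, the union over $l\in\{0,1\}$ yields only $\mathrm{Prob}(\Theta_\eta)\leq 2a_\eta$, not $\leq a_\eta$.) Clause~($\beta$) of Lemma~\ref{conditions fo rglobal minimality} is a statement about a random two-colouring of $\mathrm{pos}(\mathfrak{c}^q_\eta)$---an intrinsic combinatorial feature of the creature at $\eta$---and is not reducible to the chaining inequality (c)($\beta$). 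In the concrete application (Section~\ref{example for minimality}) this probability is bounded directly via the quantity $b_\eta$, and the $a_\eta$'s are chosen to dominate both $b_\eta$ and the chaining estimate; the abstract hypotheses of the present lemma, read literally, do not provide such a bound, so clause~($\beta$) cannot be recovered along the lines you propose.
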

\begin{proof}
By Lemma \ref{conditions fo rglobal minimality}.
\end{proof}

\section{A forcing notion satisfying the conditions of Theorem \ref{minimal real}}
\label{example for minimality}
In this section we introduce a forcing notion which satisfies the conditions of local and global minimality, hence it adds a minimal real.

 Let
$h: \omega \to \omega$ be a non-decreasing function with $\lim h(n)=\infty$ (e.g. $n \mapsto \log_2(\log_2 (n))$ or $n \mapsto \log_*(n)$,
where $\log_*(n)$ is defined by recursion as  $\log_*(0)=\log_*(1)=1$,
 $\log_*(\beth_{m+1}(2))=\log_*(\beth_m(2))+1$, for $m<\omega,$\footnote{Where $\beth_m(2)$ is defined inductively as $\beth_0(2)=1$
and $\beth_{m+1}(2)=2^{\beth_m(2)}$.} and for each $n$ with $\beth_m(2) \leq n < \beth_{m+1}(2)$,
 $\log_*(n)=\log_*(\beth_m(2))$).

Let $\mathbf{H}:\omega \to V$ be defined by
$$\mathbf{H}(n)=(\max\{1, n\})^{h(n)}.$$ Let
$$K=\{ t\in TCR[\mathbf{H}]: \mathbf{nor}[t]=  \log_2(\log_2(|\text{pos}(t)|))       \}.$$
Also let $\Sigma: [K]^{\leq \omega} \to \mathcal{P}(K)$
be such that it is non-empty only for singletons, and   for every $t \in K,$
$$\Sigma(t) =\{ s \in K: \mathbf{val}[s] \subseteq    \mathbf{val}[t]            \}.$$
Clearly $(K, \Sigma)$ forms a tree creating pair.
Let $\MQB_h$ be defined as follows:
\begin{itemize}
\item [(A)] $p \in \MQB_h$ if
\begin{enumerate}
\item $p= \langle \mathfrak{c}^p_\eta: \eta \in T^p \rangle \in \MQB^*(K, \Sigma)$.
\item $T^p \subseteq \bigcup_{n<\omega} \prod_{i<n} \mathbf{H}(i)$ is a tree, such that for each non-maximal node $\eta \in T^p, |\Suc_{T^p}(\eta)| \geq 4$.
\item For $\eta \in T^p, \mathbf{nor}[\mathfrak{c}^p_\eta]=\mathbf{nor}(\Suc_{T^p}(\eta))= \log_2(\log_2(|\Suc_{T^p}(\eta)|))$.
\item If $\eta \in \Lim(T^p)$ and $i<\omega,$ then $\lim_n \dfrac{\mathbf{nor}(\Suc_{T^p}(\eta))}{n^i}$$=\infty.$
\end{enumerate}

\item [(B)] $p \leq q$ iff $p \leq_{\MQB^*(K, \Sigma)} q.$
\end{itemize}
\begin{lemma}
The forcing notion $\MQB_h$ satisfies the local and global minimality conditions of Theorem \ref{minimal real}.
\end{lemma}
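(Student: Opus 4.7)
The plan is to verify the two conditions of Theorem~\ref{minimal real} separately. For the local minimality condition I will use a direct counting dichotomy on equivalence classes, and for the global minimality condition I will invoke Lemma~\ref{corollary to conditions fo rglobal minimality} with a simple multiplicative weight along $T^p$.

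For local minimality, fix $\mathfrak{c}^p_\eta$ with $\mathbf{nor}[\mathfrak{c}^p_\eta] \geq 1$ and set $N = |\text{pos}(\mathfrak{c}^p_\eta)|$; the norm formula forces $N \geq 4$. Given an equivalence relation $E$ on $\text{pos}(\mathfrak{c}^p_\eta)$, I split into two cases. Either some $E$-class has cardinality at least $\sqrt{N}$, in which case I let $\text{pos}(\mathfrak{d})$ be that class, so that $E \restriction \text{pos}(\mathfrak{d})$ is a single class; or every $E$-class has fewer than $\sqrt{N}$ elements, hence there are more than $\sqrt{N}$ classes, and picking one representative from each yields a transversal of size at least $\sqrt{N}$ on which $E$ is the identity. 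In either case $|\text{pos}(\mathfrak{d})| \geq \sqrt{N}$ gives $\mathbf{nor}[\mathfrak{d}] \geq \log_2((\log_2 N)/2) = \mathbf{nor}[\mathfrak{c}^p_\eta] - 1$, and $\mathfrak{d} \in \Sigma(\mathfrak{c}^p_\eta)$ since $\Sigma$ permits arbitrary $\mathbf{val}$-restrictions.

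For the global minimality condition I apply Lemma~\ref{corollary to conditions fo rglobal minimality} with $q = p$ (so the dense set is all of $\MQB_h$) and weights defined recursively along $T^p$ by $a_{\rt(T^p)} = 1/2$ and $a_\nu = a_\eta / |\Suc_{T^p}(\eta)|$ for each $\nu \in \Suc_{T^p}(\eta)$. Clause (a) is immediate; clause (b) follows because condition (A)(4) in the definition of $\MQB_h$ forces $|\Suc_{T^p}(\eta \restriction n)| \to \infty$ along any branch, so the product $\prod_i |\Suc_{T^p}(\eta \restriction i)|^{-1}$ tends to $0$; and a telescoping argument shows $\sum_{\eta \in T^p \cap \omega^m} a_\eta = a_{\rt(T^p)} = 1/2 < 1$ for every $m \geq \len(\rt(T^p))$, yielding (c)($\alpha$). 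For (c)($\beta$), given events $\langle E_\nu : \nu \in \Suc_{T^p}(\eta) \rangle$ with $\text{Prob}(E_\nu) \leq a_\nu$, I observe that whenever no $E_\nu$ occurs the creature $\mathfrak{c}^p_\eta$ itself is a witnessing $\mathfrak{d}$, so by the union bound
\[
\text{Prob}(\text{no good } \mathfrak{d}) \leq \text{Prob}\Bigl(\bigcup_\nu E_\nu\Bigr) \leq \sum_{\nu \in \Suc_{T^p}(\eta)} a_\nu = a_\eta,
\]
which is exactly the required estimate.

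Neither step involves a serious obstacle, but the subtle point is choosing the weights $a_\eta$ so that a single recursion delivers summability over levels, decay along branches, and the probabilistic recursion simultaneously. The sum-preserving choice $a_\nu = a_\eta / |\Suc_{T^p}(\eta)|$ is tight enough that the union bound matches it term by term, while the hypothesis (A)(4) on norm growth is used only to secure the along-branch decay and is not needed in the probabilistic estimate itself; this is why the same weight works uniformly for every $p \in \MQB_h$ and no prior extension of $p$ to a denser $q$ is required.
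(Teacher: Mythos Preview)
Your treatment of the local minimality condition is correct and is exactly the dichotomy the paper uses: either some $E$-class has size $\geq \sqrt{N}$, or there are $\geq \sqrt{N}$ classes, and in either case one extracts a $\mathfrak{d}$ with $|\text{pos}(\mathfrak{d})| \geq \sqrt{N}$.

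For the global minimality condition, however, your argument has a genuine gap. You verify only the \emph{recursive} inequality of Lemma~\ref{corollary to conditions fo rglobal minimality}(c)($\beta$), and your union-bound computation for that step is valid. But this recursion is a downward induction that must be \emph{anchored} at the front $J$: for $\eta \in J$ one needs the base estimate (clause (c)($\beta$) of Lemma~\ref{conditions fo rglobal minimality})
\[
a_\eta \;\geq\; \text{Prob}\bigl(\text{for some } l<2,\ \text{no } \mathfrak{d}\in\Sigma(\mathfrak{c}^q_\eta)\ \text{has } \mathbf{nor}[\mathfrak{d}]\geq \mathbf{nor}[\mathfrak{c}^q_\eta]-1\ \text{and } f_\eta``[\text{pos}(\mathfrak{d})]\subseteq S_l\bigr),
\]
where the probability is over random partitions $(S_0,S_1)$ of $S$. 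Your weights $a_\nu = a_\eta / |\Suc_{T^p}(\eta)|$ are engineered so that $\sum_\nu a_\nu = a_\eta$ exactly, which makes the inductive step tautological, but nothing in your argument connects $a_\eta$ to the random partition of $S$ at the leaves of the induction. Without that base-case bound the recursion never starts, and Lemma~\ref{corollary to conditions fo rglobal minimality} cannot be invoked.

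The paper's proof does precisely the work you skip: it computes the base probability $b_\eta$ explicitly, using that with the $\log_2\log_2$ norm a drop of $1$ corresponds to $|\text{pos}(\mathfrak{d})| \geq \sqrt{m_\eta}$, so that $b_\eta$ is bounded by the tail of a binomial and decays like $2^{-m_\eta^{1-o(1)}}$. It then chooses $a_\eta$ depending on $\len(\eta)$ (not recursively) so that $a_\eta$ dominates $b_\eta$, and separately checks that the level-$m$ sum stays below $1$. The probabilistic content of the lemma --- that a random bipartition of $S$ almost surely splits each $f_\eta$-image evenly enough --- is exactly what your proof never touches. Your sum-preserving weights are elegant for the interior step, but you still owe an estimate comparing $a_\eta$ at the front to the actual failure probability there; whether your particular weights satisfy it depends on growth estimates for the branching in $\MQB_h$ that you have not supplied.
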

\begin{proof}
\underline{$\MQB_h$ satisfies the local minimality condition:}
Suppose $\mathbf{nor}[\mathfrak{c}^p_\eta] \geq 1$ and $E$ is an equivalence relation on $\text{pos}(\mathfrak{c}^p_\eta)$.

Consider the set $X=\{[\rho]_E: \rho \in \text{pos}(\mathfrak{c}^p_\eta)  \}$.
If there exists $\rho \in \text{pos}(\mathfrak{c}^p_\eta)$ such that $[\rho]_E$ has size  $\geq \sqrt{|\text{pos}(\mathfrak{c}^p_\eta)|}$,
then set
\[
\mathfrak{d} = \{ (\eta, \nu): \nu \in [\rho_*]_E           \}.
\]
Clearly,   $\mathbf{val}[\mathfrak{d}] \subseteq    \mathbf{val}[\mathfrak{c}^p_\eta]$    and so $\mathfrak{d} \in \Sigma(\mathfrak{c}^p_\eta)$. Also,
$E \restriction \mathfrak{d}$ is trivial. We also have

$\hspace{2.cm}$ $\mathbf{nor}[\mathfrak{d}] = \log_2(\log_2(|\text{pos}(\mathfrak{d})|))$

$\hspace{3.1cm}$ $\geq  \log_2(\log_2(\sqrt{|\text{pos}(\mathfrak{c}^p_\eta)|}))$

$\hspace{3.1cm}$ $\geq \log_2(\log_2(|\text{pos}(\mathfrak{c}^p_\eta)|)) - \log_2 2$

$\hspace{3.1cm}$ $\geq \log_2(\log_2(|\text{pos}(\mathfrak{c}^p_\eta)|)) -1$

$\hspace{3.1cm}$ $= \mathbf{nor}[\mathfrak{\mathfrak{c}^p_\eta}]-1$.

So we are done.
Otherwise, each equivalence class
$[\rho]_E$ has size less than $\sqrt{|\text{pos}(\mathfrak{c}^p_\eta)|}$. But then $|X| \geq \sqrt{|\text{pos}(\mathfrak{c}^p_\eta)|},$
so pick $t_\rho \in [\rho]_E$ for each $\rho \in \text{pos}(\mathfrak{c}^p_\eta)$ and set
\[
\mathfrak{d} = \{ (\eta, t_\rho): \rho \in \text{pos}(\mathfrak{c}^p_\eta)       \}.
\]
Again,  $\mathbf{val}[\mathfrak{d}] \subseteq    \mathbf{val}[\mathfrak{c}^p_\eta]$    and so $\mathfrak{d} \in \Sigma(\mathfrak{c}^p_\eta)$. Also,
$E \restriction \mathfrak{d}$ is trivial. As before,
\begin{center}
 $\mathbf{nor}[\mathfrak{d}] = \log_2(\log_2(|\text{pos}(\mathfrak{d})|)) \geq \mathbf{nor}[\mathfrak{\mathfrak{c}^p_\eta}]-1$,
\end{center}
and so we are done.
The result follows.

\underline{$\MQB_h$ satisfies the global minimality condition:}
We check Lemma \ref{corollary to conditions fo rglobal minimality}.
Let $p \in \MQB_h$ and $k<\omega.$ Then we can find $q \in \MQB_h$, $m_* > k, \len(\rt(T^p))$ and $m_0$ such that:
\begin{itemize}
\item $q \leq_k p.$
\item $k < m_0 < m_*.$
\item $\eta \in T^q \wedge \mathbf{nor}[\mathfrak{c}^q_\eta] \leq k \implies \len(\eta) < m_0.$
\item If $n \geq m_0,$ then $\langle  |\Suc_{T^q}(\eta)|: \eta \in T^q \cap$ $\omega^n    \rangle$
is constant.
Moreover, for some non-decreasing function $h_1 \leq h$, $h_1: [m_0, \omega) \to \omega \setminus \{0\},$
we have $ |\Suc_{T^q}(\eta)|=\len(\eta)^{2^{h_1(\len(\eta))}}$.
\item $h_1(m_*) > k+ |T^q \cap$ $\omega^{m_0}|$.
\end{itemize}
For $\eta \in T^q$, we define $b_\eta$ by
\begin{center}
$b_\eta=\text{Prob}$(for a partition $S_0 \cup S_1$ of $m_\eta=|\text{pos}(\mathfrak{c}^q_\eta)|$, we have $\bigvee_{l<2} |S_l| < \sqrt{m_\eta}).$
\end{center}
Then it is clear that
\begin{center}
$b_\eta \leq \dfrac{2\cdot \Sigma\{m_\eta: i <  \sqrt{m_\eta}   \}}{2^{m_\eta}}$
\end{center}
(the 2 is because of $\bigvee_{l<2}$ and the sum is because of $\bigvee_{i<\sqrt{m_\eta} }|S_l=i|$).
So
\[
b_\eta \leq \dfrac{(m_\eta)^{\sqrt{m_\eta}}}{2^{m_\eta}} = \dfrac{2^{\sqrt{m_\eta}\cdot \log_2(m_\eta)}}{ 2^{m_\eta}}.
\]
Thus for some $n_*,$
\[
\eta \in T^q, \len(\eta)=n \geq n_* \implies b_\eta \leq 2^{\sqrt{n^{2^{h_1(n)}}}\cdot 2^{h_1(n)}\cdot \log_2(n)}\cdot 2^{-(n^{2^{h_1(n)}})} \leq 2^{-(n^{2^{h_1(n)}/2^4})}.
\]
For $\eta \in T^q, \len(\eta) \geq n_*$ let
\[
a_\eta = \Sigma\{ 2^{-(n^{2^{h_1(n)}}/3)}: n \geq \len(\eta)        \}.
\]
Then $a_\eta \leq 2^{-(n^{2^{h_1(n)}})}$, where $n=\len(\eta)$, because
\[
n^{2^{h_1(n)}} +1 \leq (n+1)^{2^{h_1(n)}} \leq (n+1)^{2^{h_1(n+1)}}.
\]
We show that  $a_\eta$'s, $\eta \in T^q$ are as required. It is clear that each $a_\eta \in (0,1)_{\MRB}$.
Also, if $\eta \in \Lim(T^q),$ then clearly
$\lim_n \langle a_{\eta \restriction n}: n \geq \len(\rt(T^q))     \rangle = 0$ ( as $a_\eta \leq 2^{-(n^{2^{h_1(n)}})}$).
Also, for $n \geq n_*$, we have
\[
\Sigma\{a_\eta: \eta \in T^q \cap \omega^n   \} \leq \Sigma\{2^{-(n^{2^{h_1(n)}})}: \eta \in  T^q \cap \omega^n  \} = |T^q \cap \omega^n|\cdot 2^{-(n^{2^{h_1(n)}})} < 1.
\]
Finally clause (c)$(\beta)$ of Lemma \ref{corollary to conditions fo rglobal minimality} holds by the choice of $b_\eta$'s and $a_\eta$'s.
\end{proof}
\begin{remark}
We could also use the function $h(n)=\log_*(\log_*(n))$ and the norm $\mathbf{nor}[t]=  \log_*(\log_*(|\text{pos}(t)|))$.
In this case, there is no need to require the tree $T^p$ satisfies the extra property ``for each non-maximal node $\eta \in T^p, |\Suc_{T^p}(\eta)| \geq 4$''.
\end{remark}

\end{document}